\DeclareMathOperator{\diver}{div}
\newcommand{\dx}{\,\mathrm{d}x}
\newcommand{\dt}{\,\mathrm{d}t}
\newcommand{\bb}{{\bf b}}
\newcommand{\vb}{\bb}
\newcommand{\be}{{\bf e}}
\newcommand{\ve}{\be}
\newcommand{\bw}{{\bf w}}
\newcommand{\vw}{\bw}
\newcommand{\vf}{{\bf f}}
\newcommand{\bF}{{\bf F}}
\newcommand{\bn}{{\bf n}}
\newcommand{\vn}{\bn}
\newcommand{\bu}{{\bf u}}
\newcommand{\vu}{\bu}
\newcommand{\bv}{{\bf v}}
\newcommand{\vv}{\bv}
\newcommand{\pat}{\partial_t}
\newtheorem{theorem}{Theorem}[section]
\newtheorem{lemma}[theorem]{Lemma}
\newtheorem{definition}[theorem]{Definition}
\numberwithin{equation}{section}
\title{Compressible fluid inside a linear oscillator}
\date{}
\author{V\' aclav M\' acha\footnote{Institute of Mathematics of the Academy of Sciences of the Czech republic,
E-mail: macha@math.cas.cz}}
\begin{document}
\maketitle

\begin{abstract}
We use Feireisl-Lions theory to deduce the existence of weak solutions to a system describing the dynamics of a linear oscillator containing a Newtonian compressible fluid. The appropriate Navier-Stokes equation is considered on a domain whose movement has one degree of freedom. The equation is paired with the Newton law and we assume a no-slip boundary condition. 
\end{abstract}

\section{Introduction}

It is known that the presence of a fluid inside a freely moving body has a tremendous effect on the stability of the system. This has been investigated several times, most recently by Disser, Galdi, Mazzone and Zunino \cite{DiGaMaZu} for incompressible fluid inside a rotating body, Galdi and Mazzone \cite{GaMa} for incompressible fluid inside a pendulum (see also \cite{GaMaMo}). In both cases there appear some non-potential forces which cause the fluid to move unless the motion of the whole system is stabilized. The energy of that flow then dissipate because of the viscosity of the fluid.

The same effect also appears in the case of compressible fluid inside a body -- we refer to \cite{GaMaNe} and \cite{GaMaNeSh}. In \cite{GaMaNeSh} authors tackle a system consisting of a pendulum containing a compressible fluid. The numerical simulations provided there showed that the compressible fluids provide better damping than the incompressible ones. This may be explained by a simple idea -- there appears a flow even if the oscillator is linear (there is no swing). Consequently, the mechanical energy of the whole system dissipates because of the viscosity term. For more see the next subsection.

The main aim of the paper is to prove the existence of solution to such system, i.e., a linear oscillator consisting of a spring and a container filled by a compressible fluid.  A container $\Omega\subset \mathbb R^{3}$,  whose position is expressed by an unknown function $\bb:[0,T)\mapsto \mathbb R^d$, $\bb(t)\times\be_1  = 0$ (i.e. $\Omega_t =\Omega_0 + \bb(t)$), is joined by a spring to a point whose position is at ${\bf f}(t)$, where ${\bf f}:[0,T)\mapsto \mathbb R^d$, ${\bf f}(t)\times \be_1 = 0$ is given -- this means that the oscillations are forced as one end of the spring possesses a motion prescribed by ${\bf f}$. 

The movement of the fluid inside a container is given by the Navier-Stokes equations, i.e.
\begin{equation}\label{NS.unsteady}
\begin{split}
\pat (\varrho \bw )  + \diver (\varrho \bw \otimes \bw) - \diver T & = 0 \ \mbox{in}\ \Omega_t\\
\pat \varrho + \diver \varrho \bw &= 0 \ \mbox{in}\ \Omega_t.
\end{split}
\end{equation}
Here $\bw:[0,T)\times \Omega_t\mapsto \mathbb R^d$ is a velocity of the fluid and $\varrho:[0,T)\times \Omega_t\mapsto \mathbb R$ is its density. We assume the no-slip boundary condition, i.e. 
\begin{equation*}
\bw{\restriction_{\partial \Omega_t}} = \dot \bb.
\end{equation*}
The stress tensor $T$ is given as
\begin{equation}\label{stress.tensor}
T = T(\bw,\varrho) = S(\nabla \bw) - I p(\varrho)
\end{equation}
where 
\begin{equation}\label{deviatoric.stress}
S(\nabla \vw) = \mu D\bw +(\lambda +\mu)I \diver \bw,
\end{equation}
$I$ is an  identity $d\times d$ matrix and $Dw = \frac 12 \left(\nabla w + (\nabla w)^T\right)$, $\mu>0$, $\lambda + \frac 23 \mu>0$,
and
\begin{equation}\label{tlak}
p(\varrho) = a \varrho^\gamma
\end{equation}
for some $a,\ \gamma\in \mathbb R^+$ specified later.

The force caused by the spring, whose given stiffness is $k>0$, is 
$$
\bF(t) = -(\bb(t) - {\bf f}(t)) k
$$
By Newton law, this cause a change in linear momentum and by the transport theorem we write
$$
\bF = \partial_t\int_{\Omega_t} \varrho \bw \dx = \int_{\Omega_t} \pat \varrho \bw + \diver(\varrho \bw\otimes \bw)\dx
$$
and we use \eqref{NS.unsteady} with the integration by parts in order to deduce the following coupling between the fluid and the movement of the container:
\begin{equation*}
-(\bb(t) - {\bf f}(t)) k = \left(\int_{\partial \Omega_t} T \bn {\rm d}S_x\cdot \ve_1\right) \ve_1.
\end{equation*}
Note that the movement is allowed only in $\ve_1$ direction.

\subsection{Rigid body case, incompressible case}
In the case of the rigid body (it is sufficient to take $\varrho \equiv 1$, $\bw\equiv \dot \bb$ and forget about \eqref{NS.unsteady}$_1$) one end up with an equation
$$
\ddot \bb + \frac k M \bb = \frac kM {\bf f}(t)
$$
where $M$ is a total mass of the rigid body at the end of the spring.

Let assume that ${\bf f}(t)\cdot \be_1 = \sin (\omega t)$. As this equation is a second-order linear differential equation with constant coefficients, we may use a standard theory (Duhamel formula) to deduce that for $\omega^2\neq \frac kM$ the solutions have form
$$
\bb\cdot \be_1 = c_1 \sin\left(\sqrt{\frac kM} t\right) + c_2\cos \left(\sqrt{\frac kM} t\right) + \frac{k}{k-M\omega^2}\sin (\omega t)
$$
where $c_1,\ c_2\in \mathbb R$ are arbitrary constants. 
However, there appears a resonance for $\omega^2 = \frac kM$ -- the solution in such case is 
\begin{equation*}
\bb\cdot \be_1 =-\frac\omega{2} t \cos (\omega t) +  c_1 \sin(\omega t) + c_2\cos (\omega t)
\end{equation*}
and one can see that such solution is unbounded.

Consider now an incompressible fluid. Let $\varrho \equiv 1$ and $\diver \bw = 0$ in \eqref{NS.unsteady}. Also assume that $p$ is an independent unknown. In that case one solution is $\bw = \dot \bb$, pressure is such that 
$$
\nabla p = \dot \bb
$$
and we end up with the same system as in the rigid body case. Roughly speaking, the force acting on the fluid is potential and it does not cause the fluid to move. Therefore, there appears no damping. 

However, this phenomena does not happen once the fluid is compressible. Indeed, the assumptions $\vw = \dot \vb$  and $\nabla p(\varrho) = \dot \vb$ yield a contradiction -- the second equation gives $\varrho(t,x) = \varrho(t)$ and thus $\pat \varrho$ is nonzero function of time which cannot be compensated by $\diver \varrho \vw = \nabla \varrho \vw + \varrho \diver \vw$ and thus \eqref{NS.unsteady}$_2$ is not fulfill. Consequently, there must appear a non-trivial flux.

\section{Formulation}
It is convenient to rewrite \eqref{NS.unsteady} such that the resulting system is considered on a bounded domain. Therefore we introduce a new variable $x = y-\bb(t)$ and a new velocity $\bu(t,x) = \bw(t,x+ b(t))$. Then\footnote{Note that $\diver(\varrho \bu \otimes \bv) = \partial_i(\varrho \bu_j \bv_i)$ where the summation convention is used}
\begin{equation}\label{NS.fixed}
\begin{split}
\pat (\varrho \bu) + \diver (\varrho \bu\otimes \bv) - \diver T & = 0 \ \mbox{in }\Omega_0\\
\pat \varrho + \diver \varrho \bv & = 0\ \mbox{in }\Omega_0
\end{split}
\end{equation} 
where $\bv = \bu-\dot\bb_t$. Furthermore, the above system is complemented by the no-slip boundary condition
\begin{equation}\label{no.slip.fixed}
\bu{\restriction_{\partial\Omega}} = \dot \bb
\end{equation}
and by the Newton law
\begin{equation}\label{NL.fixed}
-\left((\bb(t) - {\bf f}(t))k\right)\cdot \ve_1 = \left(\int_{\partial \Omega} T\bn\ {\rm d}Sx\right)\cdot\ve_1.
\end{equation}
Recall that $\vb = b \ve_1$ for some real function $b$.
Here $T = T(\vu,\varrho)$ is given by \eqref{stress.tensor}, \eqref{deviatoric.stress} and \eqref{tlak} and $\bn$ denotes unit normal aiming outside the fluid domain. We would like to point out that $T$ itself is a symmetric tensor and it depends on the symmetric part of $\nabla\vu$. Therefore, $T(\vu,p) = T(\vv,p)$ and $T:\nabla\vu = T:\nabla\vv$.

We multiply formally \eqref{NS.fixed}$_1$ by $u$ in order to obtain the energy (in)equality which posses the following form:
\begin{equation}\label{energy.equality}
\pat\int_{\Omega}\frac 12 \varrho |\bu|^2\dx + \pat \int_\Omega \frac a{\gamma-1} \varrho^\gamma \dx + \int_\Omega S(\nabla \bv):\nabla \bv \dx + \frac k2 \pat |\bb(t)|^2 \leq k \dot \bb (t) {\bf f}(t).
\end{equation}
As usual, smooth solutions satisfy \eqref{energy.equality} with the equality sign. Now, we are ready to define the notion of weak solutions.

\begin{definition}
We say, that $(\varrho,\bu)\in L^\infty((0,T),L^\gamma(\Omega))\times L^2((0,T),W^{1,2}(\Omega))$ is a weak solution to \eqref{NS.fixed}, \eqref{no.slip.fixed} and \eqref{NL.fixed} if
\begin{itemize}
\item The continuity equation is fulfilled in a weak sense, i.e.
$$
\int_0^T\int_{\Omega} \varrho \pat \varphi  + \varrho \bv \nabla \varphi \ \dx \dt + \int_\Omega \varrho_0\varphi(0,\cdot)\ \dx = 0
$$
for all $\varphi \in C^\infty_c([0,T)\times \overline \Omega)$.
\item The momentum equation is fulfilled in a weak sense, i.e.
$$
\int_0^T\int_\Omega \varrho \bu \pat \varphi  + \varrho \bu \otimes \bv \nabla \varphi + T:\nabla \varphi\ \dx \dt  + \int_0^T \varphi{\restriction_{\partial \Omega}} (\bb(t) - {\bf f}(t))k\ \dt+ \int_\Omega (\varrho \bu)_0\varphi(0,\cdot)\ {\rm d}x = 0
$$
holds for all $\varphi \in C^\infty_c([0,T)\times \overline \Omega)$ such that there exists a function $\bb_\varphi\in C^\infty_c([0,T))$ such that $\varphi{\restriction_{\partial \Omega}} = \bb_\varphi$.
\item $\bu|_{\partial \Omega} = \dot\bb(t)$ for some $\bb\in W^{1,\infty}(0,T)$, $\vb\times\ve_1 = 0$.
\item The energy inequality \eqref{NS.fixed} holds.
\end{itemize}
\end{definition}

Note that this definition is sufficient in the sense that once $(\varrho, \bu)$ is a sufficiently smooth weak solution, it solves \eqref{NS.fixed} pointwisely. Now we are ready to state the main theorem of this paper.

\begin{theorem}
Assume that $\Omega$ is a $C^{2+\nu}$, $\nu>0$ domain and $\gamma>3/2$. For any given $T>0$ and $\varrho_0\in L^\gamma(\Omega)$, $(\varrho \bu)_0\in L^{2\gamma/(\gamma+1)}(\Omega)$ there exists a weak solution to \eqref{NS.fixed}, \eqref{no.slip.fixed} and \eqref{NL.fixed}.
\end{theorem}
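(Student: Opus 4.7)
I follow the multi-level Feireisl--Lions approximation scheme (Galerkin $\to$ artificial viscosity $\to$ artificial pressure, as in Feireisl, Novotn\'y, Petzeltov\'a) adapted to the one-degree-of-freedom fluid--container coupling. Because the change of variable has already fixed the fluid domain, the only coupling is (i) the boundary datum $\vu=\dot\bb\,\ve_1$ on $\partial\Omega$ and (ii) the Newton law \eqref{NL.fixed}. I encode the first into the functional setting of the approximate velocities and the second by testing the momentum equation against a distinguished direction carrying nontrivial boundary trace.

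\textbf{Approximation.} Fix $\eta\in C^\infty(\overline\Omega;[0,1])$ with $\eta\equiv 1$ on $\partial\Omega$ and decompose every admissible velocity as $\vu=\dot b\,\eta\,\ve_1+\tilde\vu$ with $\tilde\vu\in W^{1,2}_0(\Omega)$. Let $\{\varphi_i\}_{i\geq 1}$ be an $L^2$-orthonormal basis of $W^{1,2}_0(\Omega)$. For parameters $\varepsilon,\delta>0$ and $N\in\mathbb{N}$, seek
$$
\vu_N(t,x)=\dot b_N(t)\,\eta(x)\,\ve_1+\sum_{i=1}^N\alpha_i^N(t)\,\varphi_i(x),
$$
paired with the regularised continuity equation $\pat\varrho+\diver(\varrho\vv)=\varepsilon\Delta\varrho$ with $\pn\varrho{\restriction_{\partial\Omega}}=0$, a mollified initial density, and the augmented pressure $p_\delta(\varrho)=a\varrho^\gamma+\delta\varrho^\beta$ for $\beta$ large. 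Testing the momentum equation against $\eta\ve_1$ gives a scalar equation for $b_N$ equivalent at the Galerkin level to \eqref{NL.fixed}, while testing against $\varphi_1,\dots,\varphi_N$ yields the remaining $N$ ODEs. Local solvability of the coupled ODE--PDE system follows by a Schauder fixed-point argument, and the augmented energy identity---now carrying the contributions $\tfrac{k}{2}|\bb|^2$ and $-k\,\vf\cdot\bb$---extends the solution to $[0,T]$.

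\textbf{Limit passages.} One lets $N\to\infty$, then $\varepsilon\to 0$, then $\delta\to 0$. The energy bound delivers the usual weak compactness of $(\varrho_n,\vu_n)$ and, in addition, shows that $\bb_n$ is bounded in $W^{1,\infty}(0,T)$. Integrating the momentum equation over $\Omega$ and using that $\vv_n$ vanishes on $\partial\Omega$, the identity $\pat\int_\Omega\varrho_n u_{n,1}\dx=-k(b_n-f)$ forces $\int_\Omega\varrho_n u_{n,1}\dx$ to be uniformly Lipschitz in time, which, together with the weak-in-time continuity of $\int_\Omega\varrho_n v_{n,1}\dx$, upgrades the convergence of $\dot b_n$ to strong convergence in $L^p(0,T)$ for any $p<\infty$. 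This suffices to pass to the limit in the convective term $\varrho_n\vu_n\otimes\vv_n$ with $\vv_n=\vu_n-\dot b_n\ve_1$ and in the boundary coupling term. The remaining issues---Lions' effective viscous flux identity at the $\varepsilon$-limit and the oscillations defect measure estimate at the $\delta$-limit---are independent of the body coupling and proceed as in the standard theory; the hypothesis $\gamma>3/2$ is used at the last stage to secure the higher integrability of the density needed to control $\varrho|\vu|^2$.

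\textbf{Main obstacle.} I expect the only genuinely new difficulty, beyond the standard Feireisl--Lions machinery, to be the compactness of the scalar body motion $\bb$ in a topology strong enough to simultaneously handle the quadratic convective term and the boundary contribution $\int_0^T\varphi{\restriction_{\partial\Omega}}(\bb-\vf)k\dt$. The argument relies on reading the Newton law as a genuine second-order relation for $\bb$, which takes some care because the stress tensor has only low regularity near $\partial\Omega$ and the surface integral must be rewritten in a volume form to make sense for weak solutions. A secondary subtlety is that the class of admissible test functions---those whose boundary trace is a spatially constant multiple of $\ve_1$---must be preserved through the limits; this follows by a density argument using the unique decomposition $b_\varphi\,\eta\,\ve_1+\psi$ with $\psi\in C^\infty_c([0,T)\times\Omega)$.
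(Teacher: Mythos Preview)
Your scheme coincides with the paper's: Feireisl--Lions approximation with the velocity space enlarged by the one-dimensional rigid direction, followed by the standard effective-viscous-flux and oscillation-defect arguments for the $\varepsilon$ and $\delta$ limits. The paper handles the coupling-specific points more simply than you anticipate. At the Galerkin level it dispenses with the cutoff $\eta$---since $\dot\vb$ is spatially constant it is its own extension, and approximate velocities are sought directly in $\mathbb R\ve_1\oplus X_n$---and, rather than treating $b_N$ as an extra ODE unknown, it \emph{defines} $\vb_n(t)=\vf(t)-k^{-1}\bigl(\int_{\partial\Omega}T(\vv_n,\varrho_n)\vn\,\mathrm dS\bigr)\cdot\ve_1$ as an explicit functional of $(\varrho_n,\vv_n)$, closing the construction with a Banach contraction rather than Schauder. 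For the $\varepsilon$ and $\delta$ limits no strong convergence of $\dot\vb$ is sought: the energy gives only $\vb\in L^\infty(0,T)$ (your claimed $W^{1,\infty}$ bound does not follow directly), the trace theorem applied to $\vu\in L^2((0,T),W^{1,2}(\Omega))$ yields a uniform $L^2(0,T)$ bound on $\dot\vb$, and Arzel\`a--Ascoli then gives $\vb_\varepsilon\rightrightarrows\vb$; this, together with $\varrho_\varepsilon\vu_\varepsilon\to\varrho\vu$ in $C([0,T],L^{2\beta/(\beta+1)}_{weak})$, already suffices to pass to the limit in the convective term and in the boundary contribution $\int_0^T\varphi{\restriction_{\partial\Omega}}(\vb-\vf)k\,\mathrm dt$. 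Your proposed upgrade of $\dot b_n$ to strong $L^p$ via the total-momentum identity is therefore unnecessary, and the inference from ``weak-in-time continuity of $\int_\Omega\varrho v_1$'' to strong convergence of $\dot b$ would in any case need more justification than you give.
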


This theorem is proven by means of the nowadays standard Feireisl-Lions theory (see \cite{FeNoPe}). First, we introduce an approximate system possessing certain regularity properties which allows to prove the existence result -- this is done in Section \ref{app.system}. Than, we tend with all the approximations to zero in order to reconstruct the original system. This is a content of Section \ref{tend.to.zero}

\section{Approximate system} \label{app.system}

There are two approximations. First, we introduce artificial viscosity in the continuity equation (this is represented by $\varepsilon>0$) and then we add artificial pressure to the momentum equation (represented by $\delta>0$). The system we have in mind is
\begin{equation}
\begin{split}\label{eq.app.sys}
\pat \varrho + \diver(\varrho\vv) &= \varepsilon\Delta\varrho\\
\pat(\varrho\vu) + \diver (\varrho \vu\otimes\vv) - \diver S(\vv) + \nabla p(\varrho) +  \nabla\delta \varrho^8 + \varepsilon (\nabla \varrho \nabla)\vu &= 0
\end{split}
\end{equation}
and it is complemented with 
\begin{equation}\label{eq.bound.app}
\vu{\restriction_{\partial\Omega}} = \dot\vb,\quad \frac{\partial\varrho{\restriction_{\partial\Omega}}}{\partial \vn} = 0
\end{equation}
and
\begin{equation}\label{eq.newton.app}
-(\vb(t) - \vf(t))k = \int_{\partial\Omega} T\vn \ {\rm d}S_x.
\end{equation}
Moreover, we assume
$$
\varrho(0,\cdot) = \varrho_{0,\delta}(\cdot),\ (\varrho \vu)(0,\cdot) = (\varrho\vu)_{0}(\cdot)
$$ 
where $\varrho_{0,\delta}$  is such that
\begin{equation}
\varrho_{0,\delta}\in C^{2+\nu}(\Omega),\  0\leq \underline\varrho\leq \varrho_{0,\delta}\leq \overline\varrho<\infty \label{init.rho}
\end{equation}
and
$$
\varrho_{0,\delta}\to \varrho_0\ \mbox{strongly in }L^\gamma(\Omega)\ \mbox{as }\delta\to 0.
$$
Of course, constants appearing in \eqref{init.rho} are supposed to depend on $\delta$.

\begin{theorem}\label{app.exist}
Assume $\Omega$ is in class $C^{2+\nu}$ for some $\nu>0$. There exists a solution\footnote{Hereinafter we use an abbreviation ${\max\{8,\gamma\}} = \beta$} $(\varrho,\vu)\in L^\infty((0,T),L^\beta(\Omega))\times L^2((0,T),W^{1,2}(\Omega))$ to \eqref{eq.app.sys}, \eqref{eq.bound.app} and \eqref{eq.newton.app} such that there exists $\vb\in L^\infty$ fulfilling
$
\vu{\restriction_{\partial\Omega}} = \vb
$, $\vv := \vu - \vb\in L^2((0,T),W^{1,2}_0(\Omega))$ and $\varrho\in L^2((0,T),W^{2,2}(\Omega))$. Moreover, the solution satisfies the energy inequality in a form
\begin{multline}\label{ene.ineq.app}
\pat \int_\Omega \frac 12 \varrho |\vu|^2\ {\rm d}x + \pat \int_\Omega \frac a{\gamma-1}\varrho^\gamma + \frac \delta7\varrho^8\ {\rm d}x + \int_\Omega S(\nabla\vv):\nabla\vv\ {\rm d}x\\
 + \int_\Omega \varepsilon a\gamma\varrho^{\gamma-1}|\nabla\varrho|^2 + 8 \varepsilon\delta \varrho^6|\nabla\varrho|^2\ {\rm d}x
+ \frac k2 \pat |\vb(t)|^2  
\leq k\cdot\vb(t)\vf(t)
\end{multline}
\end{theorem}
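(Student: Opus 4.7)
The plan is to apply the standard Feireisl-Novotn\'y-Petzeltov\'a (FNP) scheme, modified to accommodate the fluid-structure coupling encoded in \eqref{eq.newton.app}. Concretely: (i) build a Galerkin approximation of the velocity in which the rigid motion along $\ve_1$ is carried by one distinguished basis function, so that testing the momentum equation against it reproduces the Newton law automatically; (ii) for a prescribed Galerkin velocity, solve the parabolically regularised continuity equation by classical linear parabolic theory; (iii) combine (i) and (ii) into a closed ODE system, extend in time by the energy estimate, and pass to the limit in the Galerkin index.

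For (i) I would work in
\[
X := \{\vw \in W^{1,2}(\Omega;\mathbb R^3) : \vw|_{\partial\Omega} = \alpha\ve_1 \ \text{for some} \ \alpha\in\mathbb R\}
\]
and choose a basis $\{\psi_i\}_{i=0}^\infty$ of $X$ consisting of $\psi_0 \equiv \ve_1$ together with a smooth orthonormal basis of $W^{1,2}_0(\Omega;\mathbb R^3)$. Writing $\vu_n = \sum_{i=0}^n c_i(t)\psi_i$ and $b_n(t) = b(0) + \int_0^t c_0(s)\ds$, the Galerkin momentum equation is obtained by testing against each $\psi_j$ and adding, only for $j=0$, the spring contribution $k(b_n - \vf\cdot\ve_1)$ from \eqref{eq.newton.app}. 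For (ii), with $\vu_n$ prescribed, the equation $\pat\varrho + \diver(\varrho\vv_n) = \varepsilon\Delta\varrho$ with $\pn\varrho|_{\partial\Omega}=0$ and datum $\varrho_{0,\delta}$ is a linear non-degenerate parabolic problem; classical $L^p$-parabolic theory gives a unique $\varrho \in L^2(0,T;W^{2,2}(\Omega))\cap C([0,T];W^{1,2}(\Omega))$, and the maximum principle together with \eqref{init.rho} yields strictly positive pointwise lower and upper bounds. Inserting $\varrho[\vu_n]$ back into the Galerkin system produces a closed ODE in $(c_0,\ldots,c_n)$ with locally Lipschitz right-hand side, so Picard-Lindel\"of gives local solvability.

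The discrete version of the energy inequality \eqref{ene.ineq.app} is obtained by testing the Galerkin equation against $\vu_n$ itself and using the continuity equation on the $\tfrac12\varrho|\vu|^2$, $\tfrac a{\gamma-1}\varrho^\gamma$ and $\tfrac\delta 7\varrho^8$ terms; the spring contribution $\tfrac k2\pat|\vb_n|^2$ and the forcing $k\vb_n\cdot\vf$ arise precisely from the $\psi_0$-mode, since only that basis vector has nonzero boundary trace. This precludes finite-time blow-up and yields uniform bounds on $\sqrt{\varrho_n}\vu_n$ in $L^\infty L^2$, on $\varrho_n$ in $L^\infty L^8$, on $\vv_n$ in $L^2 W^{1,2}$, on $\vb_n$ in $W^{1,\infty}$, and on $\varepsilon^{1/2}\nabla\varrho_n^{(\gamma+1)/2}$, $(\varepsilon\delta)^{1/2}\nabla\varrho_n^4$ in $L^2 L^2$. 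Passing to the limit $n\to\infty$ is then relatively direct: the parabolic regularisation $\varepsilon\Delta\varrho$ provides strong compactness of $\varrho_n$ in $L^2(0,T;W^{1,2})\cap C([0,T];L^2)$, which is the crucial benefit of this approximation level; Aubin-Lions handles $\vu_n$; and $\vb_n$, being a single real-valued function uniformly bounded in $W^{1,\infty}$, converges in $C([0,T])$ with weak-$*$ convergence of its derivative. The main obstacle I anticipate is organising the Galerkin scheme so that it simultaneously respects the non-homogeneous Dirichlet data and the Newton coupling; once $X$ and the distinguished basis element $\psi_0$ are in place the coupling becomes automatic and the remainder reduces to the standard FNP analysis.
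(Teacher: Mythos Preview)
Your proposal is correct and follows essentially the same route as the paper: a Galerkin approximation in a finite-dimensional space that contains the rigid mode $\ve_1$ (the paper writes this space as $\mathbb R\times X_n$ with $X_n$ spanned by Dirichlet--Laplace eigenfunctions and uses a Banach fixed-point rather than Picard--Lindel\"of, but the resulting finite-dimensional system is the same), the parabolic continuity equation solved for $\varrho[\vu_n]$, the energy identity \eqref{ene.ineq.app} obtained by testing with $\vu_n$ to extend globally in time and provide uniform bounds, and passage $n\to\infty$ via the parabolic compactness of $\varrho_n$. The only small overclaim is the uniform $W^{1,\infty}$ bound on $\vb_n$: the energy yields only $\vb_n\in L^\infty$ and, via the trace of $\vu_n\in L^2W^{1,2}$, $\dot\vb_n\in L^2$, which is still enough for $\vb_n\to\vb$ in $C([0,T])$.
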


\begin{proof}
Our aim is to solve \eqref{eq.app.sys} by means of the Galerkin approximations. We consider an $n-$dimensional space spanned by egienvectors of the Laplace operator, i.e.,
$$
X_n = [{\mbox{span}}\{\psi_j\}]^3
$$
where
$$
 -\Delta \psi_n = \lambda_n \psi_n \ \mbox{on }\Omega,\ \psi_n{\restriction_{\partial\Omega}} = 0.
$$
Given $\varrho_n$, we look for an approximate solution $\vv_n,\ \vb_n$ where $\vv_n \in X_n$ and
$$
\vb_n(t) = \vf(t) - \left(\frac 1k \int_{\partial\Omega} T\vn \ {\rm d}S_x\cdot\ve_1 \right) \ve_1
$$
(note that $T$ depends only on $\varrho$ and $\vv_n$). We are looking for a function $\vu_n = \vv_n + \vb_n$  satisfying
\begin{multline}\label{duhamel}
\int_\Omega \varrho_n \vu_n\psi\ {\rm d}x - \int_\Omega (\varrho\vu)_0 \psi\ {\rm d}x \\
 = \int_0^t\int_\Omega \left(\diver S(\vv_n) - \nabla p(\varrho_n) - \delta\nabla \varrho_n^8 - \varepsilon \nabla\varrho_n\nabla\vu_n - \diver(\varrho_n \vu_n  \otimes\vv_n)\right) \psi \ {\rm d}x{\rm d}t
\end{multline}
for all $\psi\in \mathbb R^3\oplus X_n$, $\psi{\restriction_{\partial\Omega}} \times \ve_1 = 0$ (we will refer to such space by notation $\mathbb R\times X_n$ even though it is not completely correct). Note that $\vu_n$ belongs to a (finite-dimensional) space $\mathbb R\times X_n$. Here we would like to note that \eqref{duhamel} actually contains equations \eqref{eq.app.sys}$_2$ and \eqref{eq.newton.app}. 

Next, we define $\mathcal M_{\varrho_n}:\mathbb R\times X_n\mapsto \mathbb R\times X_n$ as
$$
\mathcal M_{\varrho_n}(\vv) = \vw \iff \int_\Omega \varrho_n \vv \psi\ {\rm d}x = \int_\Omega \vw \psi {\rm d}x\ \forall \psi \in \mathbb R^3\times X_n.
$$
This mapping is invertible assuming $\frac 1{\varrho_n}\in L^\infty(\Omega)$ and it satisfies
\begin{equation*}
\begin{split}
\|\mathcal M_{\varrho_n}^{-1}\|&\leq \left\|\frac 1{\varrho_n}\right\|_{L^\infty(\Omega)}\\
\mathcal M^{-1}_{\varrho_n} - \mathcal M^{-1}_{\varrho'_n}& = \mathcal M^{-1}_{\varrho_n'}(\mathcal M_{\varrho'_n} - \mathcal M_{\varrho_n}) \mathcal M^{-1}_{\varrho_n}
\end{split}
\end{equation*}
for any $\varrho_n$ and $\varrho'_n$. We deduce from \eqref{duhamel} that
$$
\vu_n = \mathcal M^{-1}_{\varrho_n} \left(\int_0^t \diver S(\vv_n) - \nabla p(\varrho_n) - \delta\nabla (\varrho_n^8) - \varepsilon\nabla \varrho_n\nabla\vu_n -\diver (\varrho_n\vu_n\otimes\vv_n)   \ {\rm d}t + (\varrho\vu)_0\right).
$$
We can deduce
\begin{equation}\label{vv.fix.point}
\vv_n = \mathcal P\left(\mathcal M^{-1}_{\varrho_n} \left(\int_0^t \diver S(\vv_n) - \nabla p(\varrho_n) - \delta\nabla (\varrho_n^8) - \varepsilon\nabla \varrho_n\nabla\vu_n -\diver (\varrho_n\vu_n\otimes\vv_n)   \ {\rm d}t + (\varrho\vu)_0\right) \right)
\end{equation}
where $\mathcal P: \mathbb R\times X_n \mapsto X_n$ is an orthogonal projection. Next, given $\vv_n$ and $\varrho_0$ there is a unique solution to
$$
\pat \varrho + \diver(\varrho\vv_n) = \varepsilon \Delta\varrho,\ \frac{\partial \varrho{\restriction_{\partial \Omega}}}{\partial \vn} = 0,\ \varrho(0,\cdot) = \varrho_{0,\delta}(\cdot).
$$
Note that $\|\vv\|_{W^{1,\infty}}\leq c$ is required to get $\left\|\frac 1{\varrho_n}\right\|\leq c$ which yields the existence of $\mathcal M^{-1}_{\varrho_n}$. Moreover, the mapping $S: C([0,T],X_n)\mapsto C([0,T], W^{1,2}(\Omega))$, $S(\vv_n) = \varrho_n$ s Lipschitz -- see \cite[Lemma 2.2]{FeNoPe}.

In particular, $\varrho_n$, $\vb_n$ and $\vu_n$ are functions of $\vv_n$ and thus \eqref{vv.fix.point} is solvable (at least on short time interval) by use of the Banach fix-point argument.

Further, we differentiate \eqref{duhamel} with respect to $t$ and we use $\vu_n(t)$ as a test function to obtain
\begin{multline}\label{first.energy}
\pat \left(\int_\Omega \frac 12 \varrho_n |\vu_n|^2 + \frac a{\gamma-1}\varrho_n^\gamma + \frac \delta 7 \varrho_n^8\ {\rm d}x\right) + \int_\Omega S(\nabla\vv_n):\nabla\vv_n  + \left(\varepsilon a \gamma \varrho_n^{\gamma-2} + 8\varepsilon \delta\varrho_n^6\right)|\nabla\varrho_n|^2 \ {\rm d}x\\ + \frac k2 \pat |\vb|^2  \leq k\dot\vb \vf.
\end{multline}
We use the Gronwall inequality to deduce\footnote{Hereinafter we use the following notation: $\|\cdot\|_{L^p}$ is a norm in $L^p(\Omega)$, $\|\cdot\|_{W^{k,p}}$ is a norm in $W^{k,p}(\Omega)$,  $\|\cdot\|_{L^pL^q}$ is norm in $L^p((0,T),L^q(\Omega))$ and $\|\cdot\|_{L^pW^{k,p}}$ is a norm in $L^p((0,T),W^{k,p}(\Omega)$.}
$$
\sup_t \|\varrho_n|\vu_n|^2\|_{L^1} + \sup_t \|\varrho_n\|_{L^\beta} + \|\vu_n\|_{L^2W^{1,2}}\leq c,
$$
where $\beta = \max\{\gamma,8\}$. Since all norms on the finite-dimensional space are equivalent we deduce a uniform bound
$$
\sup_t \left(\|\vu_n(t)\|_{L^\infty} + \|\nabla\vu_n(t)\|_{L^\infty}\right) \leq c(n,T,\varrho_0,(\varrho\vu)_0)
$$
which allow to extend the solution to \eqref{duhamel} to the whole given interval $(0,T)$. 

We are going to pass with $n\to\infty$. We deduce from \eqref{first.energy} 
\begin{equation}
\begin{split}\label{first.est}
\sup_{t\in(0,T)} \|\varrho_n\|_{L^\beta} &\leq c\\
\sup_{t\in(0,T)} \|\varrho_n|\vu_n|^2\|_{L^1} & \leq c\\
\|\vu_n\|_{L^2 W^{1,2}} & \leq c\\
\varepsilon \|\nabla\varrho_n\|_{L^2W^{1,2}}& \leq c
\end{split}
\end{equation}
and, consequently, also these convergencies (up to a subsequence)
\begin{equation*}
\begin{split}
\varrho_n&\to\varrho\ \mbox{weakly* in }L^\infty((0,T),L^\beta(\Omega))\\
\vu_n&\to \vu \ \mbox{weakly in } L^2((0,T),W^{1,2}(\Omega)).
\end{split}
\end{equation*}
The Aubin-Lions lemma together with \eqref{first.est}$_{1,4}$ yields $\varrho_n\to\varrho$ strongly in, say $L^4((0,T)\times \Omega)$. This strong convergence then gives
$$
\varrho_n\vu_n \to \varrho \vu\ \mbox{ weakly* in } L^\infty((0,T),L^{2\beta(\beta+1)}(\Omega)).
$$
Further, \cite[Lemma 2.4]{FeNoPe} yields existence of $r>1$ and $q>2$ such that
\begin{equation*}
\|\pat\varrho_n\|_{L^rL^r} + \|\Delta\varrho_n\|_{L^rL^r} + \|\nabla\varrho_n\|_{L^qL^2} \leq c
\end{equation*}
with $c$ independent of $n$. As a matter of fact, $\varrho$ and $\vv$ solves
$$
\pat\varrho + \diver(\varrho\vv) = \varepsilon \Delta\varrho.
$$
We test continuum equations by $\varrho_n$ ($\varrho$ respectively) to obtain
$$
\|\varrho_n(t)\|_{L^2}^2 + 2\varepsilon \int_0^t\|\nabla \varrho_n\|_{L^2}\ {\rm d}t = -\int_0^t\int_\Omega \diver \vv_n |\varrho_n|^2 \ {\rm d}x{\rm d}t + \|\varrho_0\|_{L^2}^2
$$
and
$$
\|\varrho(t)\|_{L^2}^2 + 2\varepsilon \int_0^t\|\nabla \varrho\|_{L^2}\ {\rm d}t = -\int_0^t\int_\Omega \diver \vv |\varrho|^2 \ {\rm d}x{\rm d}t + \|\varrho_0\|_{L^2}^2
$$
This with already mentioned convergencies yield
$$
\nabla\varrho_n \to \nabla\varrho\ \mbox{ strongly in }L^2((0,T)\times\Omega).
$$
This gives $\nabla\varrho_n\nabla\vu_n\to \nabla\varrho \nabla \vu$ in the sense of distribution and, consequently, we deduce that the limit functions $\varrho$ and $\vu$ satisfies \eqref{eq.app.sys}. More precisely, \eqref{eq.app.sys}$_1$ is fulfilled in the weak sense (and also in the renormalized weak sense due to the regularity of $\varrho$ and $\vv$) and 
\begin{multline*}
\int_0^T\int_\Omega \varrho \vu \pat \varphi + \varrho\vu \otimes\vv \nabla\varphi + S(\vv):\nabla\varphi - (p(\varrho) + \delta\varrho^8)\diver \varphi + \varepsilon \nabla \varrho_n \vu_n \varphi \ {\rm d}x{\rm d}t\\
 + \int_0^T\varphi{\restriction_\Omega}(\vb(t) - \vf(t)) k\ {\rm d}t + \int_\Omega (\varrho\vu)_0 \varphi(0,\cdot)\ {\rm d}x = 0
\end{multline*}
for all $\varphi\in C^\infty_c([0,T)\times \overline\Omega)$ for which there is a function $\vb_\varphi \in  C^\infty_c([0,T))$, $\vb_\varphi\times \ve_1 = 0$ such that $\varphi{\restriction_{\partial\Omega}} = \vb_\varphi$.

\end{proof}

\section{Vanishing of the approximations} \label{tend.to.zero}
In this Section we perform (succesively) limits $\varepsilon\to 0$ and $\delta\to 0$ in \eqref{eq.app.sys}--\eqref{eq.newton.app}.
\subsection{Limit $\varepsilon\to 0$}
Let $\varrho_\varepsilon$ and $\vu_\varepsilon$ be the solution constructed in Theorem \ref{app.exist}. We integrate \eqref{ene.ineq.app} over $(0,t)\subset (0,T)$ and we use the Gronwall inequality to obtain the following set of estimates
\begin{equation}
\begin{split} \label{ene.est.1}
{\rm esssup}_{t\in (0,T)}\|\varrho_\varepsilon\|_{L^\beta} &\leq c\\
\|\vv_\varepsilon\|_{L^2W^{1,2}} & \leq c\\
\|\bb_\varepsilon\|_{L^\infty(0,T)} & \leq c\\
{\rm esssup}_{t\in (0,T)}\|\varrho_\varepsilon|\vu_\varepsilon|^2\|_{L^1} & \leq c
\end{split}
\end{equation}
Moreover, we integrate \eqref{eq.app.sys} over $\Omega$ to get
\begin{equation}\label{mass.conservation}
\int_\Omega\varrho_\varepsilon(t,\cdot)\ {\rm d}x = \int_\Omega \varrho_{0,\delta}
\end{equation}
for all $t\in (0,T)$. Note also that \eqref{ene.est.1}$_{1,2,4}$, \eqref{mass.conservation} and \cite[Lemma 3.2]{feireisl} yields
$$
\|\vu_\varepsilon\|_{L^2W^{1,2}}\leq c.
$$
We multiply \eqref{eq.app.sys}$_1$ by $\varrho$ to deduce
$$\int_{\Omega} \frac 12 \pat |\varrho_\varepsilon|^2\ {\rm d}x + \int_\Omega\varepsilon |\nabla \varrho_\varepsilon|^2\ {\rm d}x = \int_\Omega \varrho_\varepsilon\vu_\varepsilon \nabla\varrho_\varepsilon\ {\rm d}x
$$
which together with \eqref{ene.est.1} and $\int_\Omega\varrho_\varepsilon\vu_\varepsilon\nabla\varrho_\varepsilon\ {\rm d}x = -\frac 12\int_\Omega \diver \vu_\varepsilon |\varrho_\varepsilon|^2\ {\rm d}x$ yield
\begin{equation}\label{est.rho.regular}
\varepsilon\|\nabla\varrho\|_{L^2(L^2)}^2 \leq c.
\end{equation}
It is worth to mention that constants appearing on the right hand side of the estimates \eqref{ene.est.1} and \eqref{est.rho.regular} are independent of $\varepsilon$. Next, we would like to deduce higher integrability of the density. We use $\varphi = \mathcal B(\varrho - (\varrho)_\Omega)$ as a test function in \eqref{eq.app.sys}$_2$. Here $\mathcal B$ stands for the Bogovski operator and $(\varrho)_\Omega = \frac 1{|\Omega|} \int_\Omega\varrho \ {\rm d}x$. We deduce that
\begin{multline}
\int_0^T\int_\Omega (p(\varrho_\varepsilon) + \delta\varrho_\varepsilon^8)\varrho_\varepsilon\ {\rm d}x = \int_0^T\int_\Omega(p(\varrho_\varepsilon) + \delta\varrho_\varepsilon^8)(\varrho_\varepsilon)_\Omega\ {\rm d}x{\rm d}t \\
 -\int_\Omega \left(\varrho_\varepsilon\vu_\varepsilon\mathcal B(\varrho_\varepsilon - (\varrho_\varepsilon)_\Omega)(T,\cdot) - (\varrho\vu)_0\mathcal B(\varrho_0 - (\varrho_0)_\Omega)(\cdot)\right)\ {\rm d}x + \int_0^T\int_\Omega\varrho_\varepsilon\vu_\varepsilon \pat\mathcal B(\varrho_\varepsilon - (\varrho_\varepsilon)_\Omega)\ {\rm d}x{\rm d}t\\
 + \int_0^T\int_\Omega \varrho_\varepsilon\vv_\varepsilon\otimes\vu_\varepsilon\nabla\mathcal B(\varrho_\varepsilon - (\varrho_\varepsilon)_{\Omega})\ {\rm d}x{\rm d}t - \int_0^T\int_\Omega S(\nabla\vv_\varepsilon) \nabla \mathcal B(\varrho_\varepsilon - (\varrho_\varepsilon)_\Omega)\ {\rm d}x{\rm d}t\\
 - \varepsilon\int_0^T\int_{\Omega} \nabla\varrho_\varepsilon \nabla\vu_\varepsilon \mathcal B(\varrho_\varepsilon - (\varrho_\varepsilon)_\Omega)\ {\rm d}x{\rm d}t.\label{eq.bog.test}
\end{multline}

Recall that $\mathcal B$ is a bounded linear operator which maps $L^p(\Omega)\mapsto W^{1,p}(\Omega)$ for all $p\in (1,\infty)$. Since $\varrho_\varepsilon\in L^\infty((0,T),L^\beta(\Omega))$ uniformly in $\varepsilon$ and $(\varrho_\varepsilon)_{\Omega}$ is a constant due to \eqref{mass.conservation}, we are able to deduce that the right hand side of \eqref{eq.bog.test} is bounded independently of $\varepsilon$. Indeed, the only troublemaker is the term $\int_0^T\int_\Omega \varrho_\varepsilon\vu_\varepsilon\pat\mathcal B(\varrho_\varepsilon - (\varrho_\varepsilon)_{\Omega})\ {\rm d}x{\rm d}t$. However, we use the linearity of $\mathcal B$ together with the continuity equation in order to deduce
\begin{multline*}
\int_0^T\int_\Omega \varrho_\varepsilon\vu_\varepsilon \pat \mathcal B(\varrho_\varepsilon - (\varrho_\varepsilon)_\Omega)\ {\rm d}x{\rm d}t  = \int_0^T\int_\Omega \varrho_\varepsilon \vu_\varepsilon \mathcal B(\pat\varrho)\ {\rm d}x{\rm d}t = -\int_0^T\int_\Omega\varrho_\varepsilon\vu_\varepsilon \mathcal B(\diver(\varrho_\varepsilon\vu_\varepsilon)\ {\rm d}x{\rm d}t\\
 \leq \int_0^T \|\varrho\vu\|_{2}^2\ {\rm d}t\leq \int_0^T \|\varrho\|_{L^3}^2 \|\vu\|_{L^6}^2\leq c
\end{multline*}
with $c$ independent of $\varepsilon$. Consequently, 
\begin{equation}\label{eq.rho.int}
\|\varrho_\varepsilon\|_{L^{\beta+1}((0,T)\times\Omega)}\leq c.
\end{equation}

Therefore, we may extract a subsequence of solutions such that
\begin{equation*}
\begin{split}
\varrho_\varepsilon & \to \varrho \ \mbox{ weakly* in }L^\infty((0,T),L^{\beta}(\Omega))\\
\vu_\varepsilon &\to \vu \ \mbox{ weakly in }L^2((0,T),W^{1,2}(\Omega))\\
\vb_\varepsilon &\to \vb \ \mbox{ weakly* in }L^\infty(0,T).
\end{split}
\end{equation*}
Note also that we immediately obtain also $\vv_\varepsilon\to \vv$ weakly in $L^2((0,T),W^{1,2}(\Omega))$.

Since $L^\beta(\Omega)$ is compactly embedded in $W^{-1,2}(\Omega)$ and, from \eqref{eq.app.sys}$_1$ we deduce $\pat\varrho\in L^\infty((0,T), W^{-1,2\beta/(\beta+1)}(\Omega))$ we get (see \cite[Lemma 6.2]{NoSt})
\begin{equation}\label{eq.rho.str}
\begin{split}
\varrho_\varepsilon &\to \varrho\ \mbox{ strongly in }C((0,T),L^\beta_{weak}(\Omega))\\
\varrho_\varepsilon &\to \varrho\ \mbox{ strongly in }L^p((0,T),W^{-1,2}(\Omega))
\end{split}
\end{equation}
for $p\in (1,\infty)$ and, consequently
$$
\varrho_\varepsilon \vu_\varepsilon \to \varrho\vu \ \mbox{ weakly* in }L^\infty((0,T), L^{2\beta/(\beta+1)}(\Omega)).
$$
The momentum equation together with \eqref{ene.est.1} yield a uniform continuity of $\varrho_\varepsilon\vu_\varepsilon$ in $W^{-1,(\beta+1)/\beta}(\Omega)$ and we argue similarly as before to deduce
\begin{equation}\label{eq.mom.str}
\begin{split}
\varrho_\varepsilon \vu_\varepsilon &\to \varrho\vu\ \mbox{ strongly in }C((0,T),L^{2\beta/(\beta+1)}_{weak}(\Omega))\\
\varrho_\varepsilon \vu_\varepsilon &\to \varrho\vu\ \mbox{ strongly in }L^p((0,T),W^{-1,2}(\Omega)).
\end{split}
\end{equation}
The deduced convergences then yields that $\varrho$ and $\vu$ satisfies
\begin{equation*}
\int_0^T\int_\Omega \varrho \pat \varphi + \varrho\vv \nabla\varphi\ {\rm d}x{\rm d}t + \int_\Omega \varrho_{0,\delta} \varphi(0,\cdot)\ {\rm d}x = 0
\end{equation*}
for all $\varphi \in C^\infty_c([0,T)\times \overline\Omega$ and
\begin{multline}\label{eq.mom.befe}
\int_0^T \int_\Omega \varrho \vu \pat \varphi + \varrho \vu \otimes\vv \nabla\varphi + S(\nabla\vv):\nabla\varphi - \overline{(p(\varrho)+\delta\varrho^8)}\diver\varphi\ {\rm d}x{\rm d}t + \int_0^T  \varphi{\restriction_{\partial\Omega}}(\vb(t) - \vf(t)) k\ {\rm d}t \\
 + \int_\Omega (\varrho\vu)_0 \varphi(0,\cdot)\ {\rm d}x = 0
\end{multline}
for all $\varphi \in C^\infty_c([0,T)\times\overline\Omega)$ such that there exists a function $\vb_\varphi\in C^\infty_c([0,T))$, $\vb_\varphi \times \ve_1 = 0$ fulfilling $\varphi{\restriction_{\partial\Omega}} = \vb_\varphi$. Here we use the notation $\overline{f(\varrho)}$ to denote the weak limit of $f(\varrho_\varepsilon)$.

We are going to prove that 
\begin{equation}\label{eq.tlak}
\overline{(p(\varrho) + \delta\varrho^8)} = p(\varrho) + \delta\varrho^8.
\end{equation}
The first step is to prove that
\begin{lemma}
It holds that
\begin{equation}\label{eq.e.v.f}
\int_0^T\int_\Omega \left(p(\varrho_\varepsilon) + \delta\varrho_\varepsilon^8 - (\lambda + 2\mu)\diver\vv_\varepsilon\right)\varrho_\varepsilon \varphi\ {\rm d}x{\rm d}t \to \int_0^T\int_\Omega\left(\overline{(p(\varrho) + \delta\varrho^8)} - (\lambda + 2\mu)\diver \vv\right)\varrho \varphi\ {\rm d}x{\rm d}t
\end{equation}
for all $\varphi\in C^\infty_c((0,T)\times\Omega)$.
\end{lemma}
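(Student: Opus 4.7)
The plan is to test the approximate momentum equation \eqref{eq.app.sys}$_2$ against $\psi_\varepsilon := \varphi\, \mathcal{A}[\varrho_\varepsilon]$, where $\mathcal{A}_j[f] := \partial_j \Delta^{-1}(f\mathbf{1}_\Omega)$ is the inverse divergence on $\mathbb{R}^3$ built from the Newtonian potential, and to test the limit equation \eqref{eq.mom.befe} against $\psi := \varphi\, \mathcal{A}[\varrho]$. Since $\varphi \in C^\infty_c((0,T)\times\Omega)$, all the boundary and spring-coupling contributions disappear. The pressure piece then produces $\int (p(\varrho_\varepsilon)+\delta\varrho_\varepsilon^8)\varrho_\varepsilon\varphi\,\dx\dt$ on the approximate side (using $\diver \mathcal{A}[\varrho_\varepsilon] = \varrho_\varepsilon$ in $\Omega$), and analogously $\int \overline{p(\varrho)+\delta\varrho^8}\,\varrho\varphi\,\dx\dt$ on the limit side, up to lower-order pieces carrying derivatives of $\varphi$.

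The diffusion term $\int S(\nabla \vv_\varepsilon):\nabla \psi_\varepsilon\,\dx\dt$ is handled by writing $\partial_i \mathcal{A}_j = R_{ij}:=\partial_i\partial_j\Delta^{-1}$; the identity $\sum_i R_{ii} = \mathrm{Id}$, combined with the symmetry of $D\vv_\varepsilon$, extracts the contribution $(\lambda+2\mu)\int \diver \vv_\varepsilon\cdot\varrho_\varepsilon\varphi\,\dx\dt$, while the remaining pieces either carry $\nabla\varphi$ and pass to the limit by strong convergence of $\mathcal{A}[\varrho_\varepsilon]$ in $L^p((0,T)\times\Omega)$ (coming from \eqref{eq.rho.str} and Rellich), or else depend only linearly on $R_{ij}[\varrho_\varepsilon]$ and can be closed by weak-strong pairing. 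The $\varepsilon$-dependent terms $\varepsilon\nabla\varrho_\varepsilon\nabla\vu_\varepsilon\cdot\psi_\varepsilon$ and the piece $\varepsilon\,\mathcal{A}[\Delta\varrho_\varepsilon]$ that appears in the rewriting $\pat\mathcal{A}[\varrho_\varepsilon] = -\mathcal{A}[\diver(\varrho_\varepsilon\vv_\varepsilon)] + \varepsilon\,\mathcal{A}[\Delta\varrho_\varepsilon]$ (obtained from the continuity equation) both vanish thanks to \eqref{est.rho.regular}.

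The main obstacle is the convective contribution. After combining $\int \varrho_\varepsilon\vu_\varepsilon\cdot\pat\psi_\varepsilon$ with $\int \varrho_\varepsilon\vu_\varepsilon\otimes\vv_\varepsilon:\nabla\psi_\varepsilon$ and inserting the continuity equation, one obtains, up to terms with $\nabla\varphi$, a double Riesz commutator of the schematic form
\[
\int_0^T\!\!\int_\Omega \varphi\bigl( \varrho_\varepsilon \vu_{\varepsilon,j}\, R_{ij}[\varrho_\varepsilon \vv_{\varepsilon,i}] - \varrho_\varepsilon \vv_{\varepsilon,i}\, R_{ij}[\varrho_\varepsilon \vu_{\varepsilon,j}] \bigr)\, \dx \dt,
\]
which is exactly the setting where the div-curl lemma (equivalently, the Coifman--Lions--Meyer--Semmes compactness of commutators with Riesz transforms) applies. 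Combined with the strong convergence $\varrho_\varepsilon\vu_\varepsilon \to \varrho\vu$ in $L^p(0,T;W^{-1,2}(\Omega))$ from \eqref{eq.mom.str}, the strong convergence of $\varrho_\varepsilon$ in $L^p(0,T;W^{-1,2}(\Omega))$ from \eqref{eq.rho.str}, and the weak convergence of $\vv_\varepsilon$ in $L^2(0,T;W^{1,2}(\Omega))$, this commutator converges to its natural limit with $\varrho, \vv$ replacing $\varrho_\varepsilon, \vv_\varepsilon$.

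Subtracting the identity obtained from the limit equation tested against $\psi = \varphi\,\mathcal{A}[\varrho]$ from the one obtained from the approximate equation tested against $\psi_\varepsilon$, the convective and time-derivative contributions cancel between the two sides, and exactly the identity \eqref{eq.e.v.f} survives. The crux is the commutator step; all other terms are controlled by the compactness \eqref{eq.rho.str}--\eqref{eq.mom.str}, while the $\varepsilon$-remainders vanish via \eqref{est.rho.regular}.
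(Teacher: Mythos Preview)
Your proposal is correct and follows essentially the same approach as the paper's proof: test the approximate and limit momentum equations with $\varphi\,\mathcal A[\varrho_\varepsilon]$ and $\varphi\,\mathcal A[\varrho]$ respectively, extract the effective-viscous-flux term via $\diver\mathcal A=\mathrm{Id}$, pass to the limit in all lower-order pieces using \eqref{eq.rho.str}--\eqref{eq.mom.str} and the vanishing $\varepsilon$-remainders from \eqref{est.rho.regular}, and close the convective commutator by the div-curl lemma. The only cosmetic difference is that the commutator actually produced by the computation is $\varrho_\varepsilon\vu_\varepsilon\cdot\bigl(R_{ij}[\varrho_\varepsilon\vv_{\varepsilon,i}]-\vv_{\varepsilon,i}R_{ij}[\varrho_\varepsilon]\bigr)$ (which the paper then factors as $\vu_\varepsilon$ against the div-curl expression in $\varrho_\varepsilon$ and $\varrho_\varepsilon\vv_\varepsilon$) rather than the symmetric ``schematic'' form you wrote, but the mechanism is identical.
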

\begin{proof}
We take $\Phi_j = \varphi A_j(\varrho_\varepsilon)$ as a test function in \eqref{eq.app.sys}$_2$. Here $A_j(\varrho_\varepsilon)$ is an inverse to $\diver $ and its Fourier symbol is
$$
A_j(\xi) = \frac{-i \xi_j}{|\xi|^2}
$$
(roughly, $A(\varrho_\varepsilon) = \nabla \Delta^{-1} (\varrho_\varepsilon)$).  Further, $\varphi$ is a real-valued smooth function with compact support in $(0,T)\times \Omega$. We also smphasise that $A$ has a symmetric gradient, namely $\partial_i A_j = \partial_j A_i$. Moreover, we assume $\varrho$ is extended by $0$ outside of $\Omega$. After a cumbersome calculation we arrive at
\begin{multline}
\int_0^T \int_\Omega \varphi (p(\varrho_\varepsilon) + \delta \varrho_\varepsilon^8 - (\lambda + 2\mu)\diver \vv_\varepsilon)\varrho_\varepsilon \ {\rm d}x{\rm d}t\\
\label{rovnice.410}%
 = \int_0^T\int_\Omega (\lambda+ \mu) \diver \vv_\varepsilon \nabla \varphi A(\varrho_\varepsilon)\ {\rm d}x{\rm d}t - \int_0^T\int_\Omega (p(\varrho_\varepsilon) + \delta \varrho^8)\nabla \varphi A(\varrho_\varepsilon)\ {\rm d}x{\rm d}t\\
 + \mu \int_0^T \int_\Omega \nabla \vu_\varepsilon \nabla \varphi A(\varrho_\varepsilon)\ {\rm d}x{\rm d}t - \mu \int_0^T \int_\Omega \vv_\varepsilon \nabla \varphi \nabla A(\varrho_\varepsilon) \ {\rm d}x{\rm d}t + \mu \int_0^T \int_\Omega  \vv_\varepsilon \nabla \varphi \varrho_\varepsilon \ {\rm d}x{\rm d}t\\
+ \varepsilon \int_0^T\int_\Omega \nabla \vu_\varepsilon \nabla \varrho_\varepsilon \varphi A(\varrho_\varepsilon)\ {\rm d}x{\rm d}t - \varepsilon \int_0^T\int_\Omega \varrho_\varepsilon \vu_\varepsilon \varphi A(\diver (\chi_\Omega \nabla \varrho_\varepsilon)) \ {\rm d}x{\rm d}t - \int_0^T\int_\Omega \varrho_\varepsilon \vv_\varepsilon \otimes \vu_\varepsilon \nabla \varphi A(\varrho_\varepsilon)\ {\rm d}x{\rm d}t\\
-\int_0^T\int_\Omega \varrho_\varepsilon \vu_\varepsilon \pat \varphi A(\varrho_\varepsilon)\ {\rm d}x{\rm d}t
+\int_0^T\int_\Omega \varphi \varrho_\varepsilon \vu_\varepsilon (\partial_i A_j(\varrho_\varepsilon \vv_{i\varepsilon}) - \vv_{i\varepsilon}\partial_i A_j(\varrho_\varepsilon))\ {\rm d}x{\rm d}t
\end{multline}

We also use $\varphi A(\varrho)$ as a test function in \eqref{eq.mom.befe} to deduce
\begin{multline}
\int_0^T\int_\Omega \varphi  (\overline{(p(\varrho) + \delta \varrho^8)} - (\lambda +2\mu) \diver \vv)\varrho \ {\rm d}x{\rm d}t\\
\label{rovnice.411}%
= \int_0^T\int_\Omega (\lambda + \mu) \diver \vv \nabla \varphi A(\varrho)\ {\rm d}x{\rm d}t -\int_0^T\int_\Omega \overline{(p(\varrho) + \delta \varrho^8)} \nabla \varphi A(\varrho)  \ {\rm d}x{\rm d}t + \mu\int_0^T\int_\Omega \nabla \vu \nabla \varphi A(\varrho)\ {\rm d}x{\rm d}t\\
-\mu \int_0^T\int_\Omega \vv \nabla \varphi \nabla A(\varrho)\ {\rm d}{\rm d}t + \mu \int_0^T\int_\Omega \vv \nabla \varphi \varrho \ {\rm d}x{\rm d}t - \int_0^T\int_\Omega \varrho \vv \otimes \vu \nabla \varphi A(\varrho)\ {\rm d}x{\rm d}t\\
-\int_0^T\int_\Omega\varrho \vu \pat \varphi A(\varrho)\ {\rm d}x{\rm d}t + \int_0^T\int_\Omega \varphi \varrho \vu (\partial_i A_j(\varrho \vv_i) - \vv_i \partial_i A_j(\varrho))\ {\rm d}x{\rm d}t
\end{multline}

We compare \eqref{rovnice.410} and \eqref{rovnice.411} and we use the fact that the Mikhlin-H\"ormander  multipliers theorem \cite[Theorem 5.2.7]{grafakos} together with aleready known convergencies yield
\begin{equation*}
\begin{split}
A(\varrho_\varepsilon)&\to A(\varrho)\ \mbox{ in }C([0,T]\times \overline\Omega)\\
\nabla A(\varrho_\varepsilon)&\to \nabla A(\varrho)\ \mbox{ in }L^\infty((0,T), L^\beta_{weak}(\Omega)).
\end{split}
\end{equation*}
Consequently
\begin{multline*}
\lim_{\varepsilon\to 0} \int_0^T\int_\Omega \varphi ( (\varrho_\varepsilon (p(\varrho_\varepsilon) + \delta\varrho_\varepsilon^8 - (\lambda + 2\mu)\diver \vu_\varepsilon) - (\overline{(p(\varrho) + \delta \varrho^8)} - (\lambda + 2\mu)\diver \vu)\varrho)\ {\rm d}x{\rm d}t\\
 = \lim_{\varepsilon\to 0}\int_0^T\int_\Omega \varphi \varrho_\varepsilon \vu_\varepsilon (\partial_i A_j(\varrho_\varepsilon \vv_{i\varepsilon}) - \vv_{i\varepsilon} \partial_i A_j(\varrho_\varepsilon))\ {\rm d}x{\rm d}t 
 - \int_0^T\int_\Omega \varphi \varrho \vu (\partial_i A_j(\varrho \vv_i) - \vv_i \partial_i A_j(\varrho))\ {\rm d}x{\rm d}t
\end{multline*}
and now it is sufficient to show that the right hand side is $0$. 

Using Div-Curl lemma one obtains
$$
\begin{array}{l}
\vv_n\to \vv \ \mbox{weakly in }L^p(\mathbb R^3)\\
\vw_n\to \vw \ \mbox{weakly in }L^q(\mathbb R^3)
\end{array}
\Rightarrow 
\vv_n \partial_i A_j(\vw_n) - \vw_n \partial_i A_j(\vv_n)\to \vv \partial_i A_j(\vw) - \vw \partial_i A_j(\vv)\ \mbox{weakly in }L^r(\mathbb R^3)
$$
whenever $r = \frac{pq}{p+q}<1$. For details of proof we refer to \cite[Lemma 3.4]{FeNoPe}. Further, \eqref{eq.rho.str} and \eqref{eq.mom.str} yield
$$
\varrho_\varepsilon \partial_iA_j (\varrho_\varepsilon \vv_{i\varepsilon}) - \varrho_\varepsilon \vv_{i\varepsilon} \partial_i A_j(\varrho_\varepsilon)\to \varrho \partial_i A_j(\varrho \vv) - \varrho \vv \partial_i A_j(\varrho)\ \mbox{strongly in }L^\infty((0,T),L^{2\beta/(\beta+3)}_{weak}(\Omega))
$$
and the claim follows due to the compact embedding of $L^{2\beta/(\beta+3)}(\Omega)$ to $W^{-1,2}(\Omega)$.
\end{proof}

Recall that $\varrho_\varepsilon$, $\vv_\varepsilon$ are regular enough to fulfill the renormalized continuity equation
$$
\pat b(\varrho_\varepsilon) + \diver (b(\varrho_\varepsilon)\vu_\varepsilon) + (b'(\varrho_\varepsilon)\varrho_\varepsilon - b(\varrho_\varepsilon))\diver \vu_\varepsilon - \varepsilon \Delta b(\varrho_\varepsilon)\leq 0
$$
for every $b$ convex and globally Lipschitz and the renormalized continuity equation is true also for $\varrho$ and $\vu$, namely
$$
\pat b(\varrho) + \diver(b(\varrho)\vu) + (b'(\varrho)\varrho - b(\varrho))\diver\vu =0.
$$
Taking $b(z) = z\log  z$ yields
$$
\int_0^\tau\int_\Omega \varrho_\varepsilon\diver \vu_\varepsilon \ {\rm d}x{\rm d}t\leq \int_\Omega \varrho_0\log \varrho_0\ {\rm d}x - \int_\Omega \varrho_\varepsilon(\tau,\cdot)\log \varrho_\varepsilon(\tau,\cdot)\ {\rm d}x
$$
and
$$
\int_0^\tau\int_\Omega \varrho \diver \vu \ {\rm d}x{\rm d}t = \int_\Omega \varrho_0\log \varrho_0 \ {\rm d}x - \int_\Omega \varrho(\tau,\cdot)\log \varrho(\tau,\cdot)\ {\rm d}x
$$
for every $\tau\in [0,T]$. Let $\Phi_n$ be a sequence of smooth compactly supported functions such that $\Phi_n\to 1$ in $L^q(\Omega)$ for sufficiently large $q$. We deduce
\begin{multline*}
\int_\Omega \varrho(\tau,\cdot)\log \varrho(\tau,\cdot) - \varrho_\varepsilon(\tau,\cdot) \log \varrho_\varepsilon(\tau,\cdot)\ {\rm d}x\geq \int_0^\tau\int_\Omega \varrho_\varepsilon\diver\vu_\varepsilon - \varrho\diver \vu\ {\rm d}x{\rm d}t\\
 = \int_0^\tau\int_\Omega\Phi_n\left( \varrho_\varepsilon\diver\vu_\varepsilon - \frac 1{\lambda + 2\mu}(p(\varrho_\varepsilon) + \delta\varrho_\varepsilon^8)\varrho_\varepsilon + \frac 1{\lambda + 2\mu}(p(\varrho_\varepsilon) + \delta\varrho_\varepsilon^8)\varrho_\varepsilon - \varrho\diver\vu\right) \ {\rm d}x{\rm d}t\\
 + \int_0^\tau \int_\Omega (1-\Phi_n)(\varrho_\varepsilon \diver \vu_\varepsilon - \varrho \diver \vu)\ {\rm d}x{\rm d}t
\end{multline*}
and we use \eqref{eq.e.v.f} in order to get
\begin{multline}\label{eq.convex.rho}
\int_\Omega \varrho(\tau,\cdot)\log \varrho(\tau,\cdot) - \overline{\varrho(\tau,\cdot)\log \varrho(\tau,\cdot)}\ {\rm d}x\geq \frac 1{\lambda + 2\mu} \int_0^\tau\int_\Omega \left(\overline{(p(\varrho) - \delta\varrho^8)\varrho} - \overline{(p(\varrho) - \delta\varrho^8)} \varrho\right)\Phi_n\ {\rm d}x{\rm d}t + \eta(n)\\
\geq 0 + \eta(n).
\end{multline}
where the last inequality is true because of the monotonicity of the mapping $\varrho\mapsto p(\varrho) + \delta\varrho^8$. Since $\eta(n)$ can be made arbitrarily small and since the mapping $\varrho\mapsto \varrho\log\varrho$ is convex, we get from \eqref{eq.convex.rho}
$$
\varrho_\varepsilon\to \varrho \ \mbox{almost everywhere in }(0,T)\times\Omega
$$
which, together with \eqref{eq.rho.int} yields \eqref{eq.tlak}.

Lastly, since \eqref{ene.ineq.app} yields
\begin{multline*}
-\int_0^T \left(\int_\Omega\frac 12 \varrho_\varepsilon |\vu_\varepsilon|^2 + \frac a{\gamma-1}\varrho_\varepsilon^\gamma + \frac \delta 7 \varrho_\varepsilon^8\ {\rm d}x + \frac k2 |\vb_\varepsilon(t)|^2\right) \pat \varphi\ {\rm d}t\\
 + \int_0^T \left(\int_\Omega S(\nabla 
\vv_\varepsilon):\nabla\vv_\varepsilon + \varepsilon a \gamma \varrho_\varepsilon^{\gamma-1}|\nabla\varrho_\varepsilon|^2 + 8\varepsilon\delta\varrho_\varepsilon^7|\nabla\varrho_\varepsilon|^2\ {\rm d}x - k\dot\vb_\varepsilon(t)\vf(t)\right) \varphi(t)\ {\rm d}t\\
 + \left(\int_\Omega \frac 12 \varrho_\varepsilon |\vu_\varepsilon|^2 + \frac a{\gamma-1}\varrho_\varepsilon^\gamma + \frac\delta8\varrho_\varepsilon^8\ {\rm d}x\right)\varphi(0)\leq 0
\end{multline*}
for all $\varphi\in C^\infty_c([0,T))$, $\varphi\geq 0$. We recall that due to the trace theorem (see e.g. \cite[Theorem 5.36]{AdFo}) we have
$\dot \vb_\varepsilon\in L^2(0,T)$ and we may use the Arz\'ela-Ascoli theorem to deduce 
$$\vb_\varepsilon \rightrightarrows \vb$$
Using the already known convergences we are able to deduce that
\begin{equation}\label{ene.po.eps}
\pat \int_\Omega \frac 12 \varrho|\vu|^2\ {\rm d}x + \pat \int_\Omega \frac a{\gamma-1}\varrho^\gamma + \frac \delta 7 \varrho^8\ {\rm d}x + \int_\Omega S(\nabla\vv):\nabla \vv\ {\rm d}x + \frac k2 \pat |\vb(t)|^2\leq k\dot\vb(t)\vf(t)
\end{equation}

\subsection{Limit $\delta\to 0$}

Let $\varrho_\delta,\ \vu_\delta( = \vv_\delta + \vb_\delta)$ be a solution constructed in the previous chapter corresponding to some positive parameter $\delta>0$. Here we are going to pass with $\delta$ to $0$. From \eqref{ene.po.eps} we deduce
\begin{equation}
\begin{split}\label{delta.odhady}
\mbox{esssup}_{t\in (0,T)} \|\varrho_\delta |\vu_\delta|^2\|_{L^1} & \leq c\\
\mbox{esssup}_{t\in (0,T)} \|\varrho_\delta\|_{L^\gamma} & \leq c\\
\|\vb_\delta\|_{L^\infty((0,T))} & \leq c\\
\|\vv_\delta\|_{L^2W^{1,2}} & \leq c\\
\|\vu_\delta\|_{L^2W^{1,2}} & \leq c
\end{split}
\end{equation}
and we also claim the following convergences
\begin{equation*}
\begin{split}
\varrho_\delta &\to \varrho\ \mbox{weakly* in }L^\infty((0,T),L^\gamma(\Omega))\\
\vu_\delta & \to \vu \ \mbox{weakly in }L^2((0,T),W^{1,2}(\Omega))\\
\vb_\delta & \to \vb \ \mbox{strongly in } L^\infty((0,T))
\end{split}
\end{equation*}
where the last one follows from the trace theorem and Arz\'ela-Ascoli theorem.

The density satisfies the following estimate
\begin{equation}\label{high.reg.rho}
\int_0^T\int_\Omega \varrho_\delta^{\gamma + \theta} \ {\rm d}x{\rm d}t\leq c
\end{equation}
for  some $\theta>0$ and with $c$ independent of $\delta$. Indeed, we take $\varphi = \mathcal B(\varrho_\delta^\theta - (\varrho_\delta^\theta)_{\Omega})$ for some $\theta>0$ specified later as a test function in \eqref{eq.mom.befe}. We obtain
\begin{multline*}
\int_0^T\int_\Omega (p(\varrho_\delta) + \delta \varrho_\delta^8)\varrho_\delta^\theta \ {\rm d}x{\rm d}t  = \int_0^T\int_\Omega (p(\varrho_\delta) + \delta\varrho_\delta^8)(\varrho_\delta^\theta)_\Omega \ {\rm d}x{\rm d}t\\
 + \int_0^T\int_\Omega \varrho_\delta \vu_\delta \mathcal B(\pat (\varrho_\delta^\theta - (\varrho_\delta^\theta)_\Omega))\ {\rm d}x{\rm d}t - \left[\int_\Omega \varrho_\delta\vu_\delta \mathcal B(\varrho_\delta^\theta - (\varrho_\delta^\theta)_\Omega)\ {\rm d}x\right]_{\tau = 0}^{\tau = T}\\
 + \int_0^T \int_\Omega \varrho_\delta \vv_\delta \otimes \vu_\delta \nabla \mathcal B(\varrho_\delta^\theta - (\varrho_\delta^\theta)_\Omega)\ {\rm d}x{\rm d}t + \int_0^T\int_\Omega S(\nabla \vv_\delta): \nabla \mathcal B(\varrho_\delta^\theta - (\varrho_\delta^\theta)_{\Omega})\ {\rm d}x{\rm d}t.
\end{multline*}
Recall that $\varrho_\delta \in L^{\gamma/\theta}(\Omega)$ and that $\mathcal B: L^p(\Omega)\mapsto W^{1,p}(\Omega)$. With this and \eqref{delta.odhady} at hand one can easily deduce estimates of all term on the right hand side with one exception -- the second term. There one has to use the continuity equation in order to get
\begin{multline*}
\int_0^T\int_\Omega \varrho_\delta \vu_\delta \mathcal B(\pat(\varrho_\delta^\theta - (\varrho_\delta^\theta)))\ {\rm d}x{\rm d}t\\
= \int_0^T\int_\Omega \varrho_\delta \vu_\delta \mathcal B \left(-\diver(\varrho_\delta^\theta \vv_\delta) - (\theta-1)\varrho^\theta \diver \vv_\delta + (\diver (\varrho_\delta^\theta \vv_\delta) + (\theta-1)\varrho_\delta^\theta \diver \vv_\delta)_\Omega\right)\ {\rm d}x{\rm d}t \\
= -\int_0^T\int_\Omega \varrho_\delta \vu_\delta \varrho_\delta^\theta \vv_\delta\ {\rm d}x{\rm d}t  - \int_0^T\int_\Omega \varrho_\delta \vu_\delta \mathcal B((\theta - 1)\varrho_\delta^\theta \diver \vv_\delta - ((\theta-1) \varrho_\delta^\theta \diver \vv_\delta)_\Omega)\ {\rm d}x{\rm d}t. 
\end{multline*}
Estimates \eqref{delta.odhady} yields $\varrho_\delta \vu_\delta \vv_\delta \in L^2((0,T),L^{6\gamma/(4\gamma + 3)}(\Omega))$ and thus  it is enough to choose $\theta$ sufficiently small to get $\varrho_\delta^\theta\in L^\infty ((0,T), L^{6\gamma/(2\gamma - 3)}(\Omega))$ and we get the control of the first term on the right hand side. The control of the second term follows easily as $\varrho_\delta^\theta\diver \vv_\delta \in L^2((0,T),L^{(2\gamma)/(\gamma+\theta)}(\Omega))$ and thus (with the help of Sobolev embedding theorem)
$$
\mathcal B ((\theta - 1)\varrho_\delta^\theta \diver \vv_\delta - ((\theta -1)\varrho_\delta^\theta \diver \vv_\delta)_\Omega) \in L^2((0,T), L^{6\gamma/(\gamma + 3\theta)}(\Omega)).
$$
Now, assuming $\theta$ is sufficiently small, we get the boundedness of the second term on the right hand side and we conclude \eqref{high.reg.rho}.

Similarly as before we claim that
\begin{equation*}
\begin{split}
\varrho_\delta \to \varrho\ \mbox{strongly in }L^p((0,T),W^{-1,2}(\Omega))\\
\varrho_\delta \vu_\delta \to \varrho \vu \ \mbox{strongly in }L^p((0,T),W^{-1,2}(\Omega))
\end{split}
\end{equation*}
for all $p\in (1,\infty)$. Here we used that $L^{2\gamma/(\gamma+1)}(\Omega)$ is compactly embedded into $W^{-1,2}(\Omega)$ assuming $\gamma>\frac32$. 

As a matter of fact, $\varrho,\ \vu$ satisfy
\begin{multline*}
\int_0^T\int_\Omega \varrho \vu \pat \varphi  + \varrho \vv \otimes \vu \nabla \varphi + S(\nabla \vv):D \varphi - \overline{p(\varrho)}\diver \varphi \ {\rm d}x{\rm d}t + \int_0^T \varphi{\restriction_{\partial \Omega}}(\vb(t) - \vf(t))k\ {\rm d}t\\
+ \int_\Omega (\varrho \vu)_0 \varphi(0,\cdot)\ {\rm d}x = 0. 
\end{multline*}
It remains to proof that $\varrho_\delta \to \varrho$ almost everywhere as then $\overline{p(\varrho)} = p(\varrho)$ and also the energy inequality can be deduced similarly as in the previous chapter.

Let $k\in \mathbb N$. We define
$$
T_k(z) = k T\left(\frac zk\right), \ z\in \mathbb R
$$
where $T$ is a smooth concave function satisfying
$$
T(z) = \left\{
\begin{array}{l}
z\ \mbox{for }z\leq 1\\
2\ \mbox{for }z\geq 3.
\end{array}
\right.
$$
In what follows, we will use that the following equality
\begin{equation}\label{effective.flux}
\lim_{\delta \to 0} \int_0^T\int_\Omega \varphi \left(p(\varrho_\delta) - (\lambda + 2\mu)\diver \vv_\delta\right)T_k(\varrho_\delta)\ {\rm d}x{\rm d}t 
= \int_0^T\int_\Omega \varphi \left(\overline{p(\varrho)} - (\lambda +2\mu\diver \vv \right) T_k (\varrho)\ {\rm d}x{\rm d}t
\end{equation}
holds for all smooth $\varphi$ with compact support in $(0,T)\times \Omega$. The proof of this equality is similar to the proof of \eqref{eq.e.v.f} with  only difference -- we take $\Phi_j = \varphi A_j(T_k(\varrho_\delta))$. Nevertheless, as all arguments are similar, we neglect the proof. 

Further, $\varrho,\vv$ solves the renormalized equation of continuity, namely
\begin{equation}\label{renorm.con}
\pat b(\varrho) + \diver (b(\varrho)\vv) + (b'(\varrho)\varrho - b(\varrho))\diver\vv = 0.
\end{equation}
in the weak sense for any $b\in C^1(\mathbb R)$ with $b'(z)=0$ for $z$ sufficiently large.

Indeed. First note that we can pass to a limit with $\varphi\to 1$ in \eqref{effective.flux} we deduce
\begin{equation}\label{flux.everywhere}
\lim_\delta\int_0^T\int_\Omega (p(\varrho_\delta) - (\lambda + 2\mu)\diver \vv_\delta ) T_k(\varrho_\delta)\ {\rm d}x{\rm d}t = \int_0^T\int_\Omega \left( \overline{p(\varrho)} - (\lambda + 2\mu \diver \vv) \right) T_k(\varrho)\ {\rm d}x{\rm d}t
\end{equation}
since integrands on both sides are equi-integrable provided $k$ is fixed. Next, we have
$$
\int_0^T\int_\Omega \varrho_\delta^\gamma T_k(\varrho_\delta) - \overline{\varrho^\gamma} \overline{T_k(\varrho)} \ {\rm d}x{\rm d}t \geq\ \int_0^T\int_\Omega (\varrho_\delta^\gamma - \varrho^\gamma) (T_k(\varrho_\delta) - T_k(\varrho))\geq \int_0^T\int_\Omega |T_k(\varrho_\delta) - T_k(\varrho)|^{\gamma+1}\ {\rm d}x{\rm d}t
$$
and
\begin{multline*}
\lim_{\delta\to 0} \int_0^T\int_\Omega \diver \vv_\delta T_k(\varrho_\delta ) - \diver \vv \overline{T_k(\varrho)}\ {\rm d}x{\rm d}t = \lim_{\delta\to 0}\int_0^T\int_\Omega \left( T_k(\varrho_\delta) - T_k(\varrho) + T_k(\varrho) - \overline{T_k(\varrho)}\right) \diver\vv_\delta \ {\rm d}x{\rm d}t\\
\leq 2 \|\diver \vv_\delta\|_{L^2((0,T)\times\Omega)} \|T_k(\varrho_\delta) - T_k(\varrho)\|_{L^2((0,T)\times \Omega)}.
\end{multline*}
With help of \eqref{flux.everywhere} we get
\begin{multline*}
0 = \lim_{\delta\to 0} \int_0^T\int_\Omega \varrho_\delta^\gamma T_k(\varrho_\delta) - \overline{\varrho^\gamma} \overline{T_k(\varrho)} - (\lambda + 2\mu)\left(\diver\vv_\delta T_k(\varrho_\delta) - \diver \vv  \overline{T_k(\varrho)}\right)\ {\rm d}x{\rm d}t\\
\geq \|T_k(\varrho_\delta) - T_k(\varrho) \|_{L^{\gamma+1}((0,T)\times\Omega)}^{\gamma+1} - 2\|\diver \vv_\delta\|_{L^2((0,T)\times\Omega)} \|T_k(\varrho_\delta) - T_k(\varrho)\|_{L^2((0,T)\times\Omega)}.
\end{multline*}
This yields
\begin{equation*}
\|T_k(\varrho_\delta) - T_k(\varrho)\|_{L^{\gamma+1}((0,T)\times\Omega)}\leq c
\end{equation*}
 Using a smoothening kernel we may deduce the validity of equation
\begin{equation}\label{pre.renorm}
\pat b(\overline{T_k(\varrho)}) - \diver (b(\overline{T_k(\varrho)})\vv) + (b'(\overline{T_k(\varrho)}) \overline{T_k(\varrho)} - b(\overline{T_k(\varrho)}))\diver \vv = b(\overline{T_k(\varrho)}) \left(\overline{(T_k(\varrho) - T'_k(\varrho)\varrho   )\diver \vv}\right)
\end{equation}
in $D'((0,T)\times \Omega)$. We plan to tend with $k$ to $\infty$. First, we have
$$
\overline{T_k(\varrho)}\to \varrho \ \mbox{ strongly in }L^p((0,T)\times\Omega),\ 1\leq p<\gamma.
$$
This follows as
\begin{equation*}
\|\overline{T_k(\varrho)} - \varrho\|^p_{L^p((0,T)\times\Omega)}\leq \mbox{liminf}_{\delta\to 0} \|T_k(\varrho_\delta) - \varrho_\delta\|_{L^p((0,T)\times\Omega)}^p\\
\leq \mbox{liminf}_{\delta\to 0} 2^pk^{p-\gamma}\|\varrho_\delta\|_{L^\gamma((0,T)\times\Omega}^\gamma\to 0.
\end{equation*}
for $k\to\infty$.\\
Since $b'(z) \equiv  0$ for $z\geq M$, we have
\begin{multline}\label{b.s.carkou}
\int_0^T\int_\Omega \left|b(\overline{T_k(\varrho)}) \left(\overline{(T_k(\varrho) - T'_k(\varrho)\varrho   )\diver \vv}\right)\right|\ {\rm d}x{\rm d}t \\
\leq  \sup_{0\leq z\leq M} |b'(z)| \|\vv_\delta\|_{L^2W^{1,2}} \mbox{liminf}_{\delta\to 0} \|T'_k(\varrho_\delta)\varrho_\delta - T_k(\varrho_\delta)\|_{L^2(Q_{k,m})}.
\end{multline}
where $Q_{k,m} = \{(t,x)\in (0,T)\times \Omega,\ \overline{T_k(\varrho)}\leq M\}$. The H\"older inequality yields
\begin{equation}\label{holder.ine}
\|T'_k(\varrho_\delta)\varrho_\delta - T_k(\varrho_\delta)\|_{L^2(Q_{k,m})}^2
\leq \|T'_k(\varrho_\delta)\varrho_\delta - T_k(\varrho_\delta)\|_{L^1((0,T)\times\Omega)}^{(\gamma-1)/\gamma} \|T'_k(\varrho_\delta)\varrho_\delta - T_k(\varrho_\delta)\|_{L^{\gamma+1}(Q_{k,m})}^{(\gamma+1)/\gamma}.
\end{equation}
Further, we have 
$$
\|T'_k(\varrho_\delta)\varrho_\delta - T_k(\varrho_\delta)\|_{L^1((0,T)\times\Omega)} \leq 2^\gamma k^{1-\gamma} \sup_{\delta} \|\varrho_\delta\|_{L^\gamma((0,T)\times\Omega)}^\gamma
$$
and, since $T'_k(z) z \leq T_k(z)$,
\begin{multline}
\|T'_k(\varrho_\delta)\varrho_\delta - T_k(\varrho_\delta)\|_{L^{\gamma+1}(Q_{k,m})}\leq 2\left(\|T_k(\varrho_\delta) - T_k(\varrho)\|_{L^{\gamma+1}((0,T)\times\Omega)} + \|T_k(\varrho)\|_{L^{\gamma+1}(Q_{k,m})}\right)\\
\leq 2\left(\|T_k(\varrho_\delta) - T_k(\varrho)\|_{L^{\gamma+1}((0,T)\times\Omega)} + \|T_k(\varrho) - \overline{T_k(\varrho)}\|_{L^{\gamma+1}(Q_{k,m})} + \|\overline{T_k(\varrho)}\|_{L^{\gamma+1}(Q_{k,M})}\right) \leq c\label{oscilacni.mira}
\end{multline}
The renormalize continuity equation \eqref{renorm.con} follows from \eqref{pre.renorm}, \eqref{b.s.carkou}, \eqref{holder.ine} and \eqref{oscilacni.mira}.

To conclude the proof we introduce
$$
L_k =\left\{
\begin{array}{l}
z\log z\mbox{ for } 0\leq z\leq k,\\
z\log k + z\int_k^z\frac1{s^2}T_k(s)\ {\rm d}s \mbox{ for }z\geq k.
\end{array}
\right.
$$

We use this as $b$ in the renormalized equation \eqref{renorm.con} to conclude
$$
\pat L_k(\varrho_\delta) + \diver(L_k(\varrho_\delta)\vv_\delta) + T_k(\varrho_\delta)\diver\vv_\delta = 0
$$
and
$$
\pat L_k(\varrho) + \diver(L_k(\varrho)\vv) + T_k(\varrho)\diver\vv = 0.
$$

Similarly to the previous section we deduce (compare with \eqref{eq.convex.rho})
$$
\int_\Omega L_k(\varrho(\tau,\cdot)) - \overline{L_k(\varrho(\tau,\cdot))}\ {\rm d}x \geq 0
$$
for almost all $\tau\in [0,T]$. We send $k\to\infty$ to deduce that $\varrho\log\varrho = \overline{\varrho\log\varrho}$ which yields $\varrho_\delta\to\varrho$ almost everywhere. The proof of Theorem \ref{app.exist} is finished.
\bigskip

{\bf Acknowledgement:} This research was funded by Czech Science Foundation project number Grant GA19-04243S  in the framework of RVO: 67985840.

\bibliographystyle{plain}
\bibliography{literatura}

\begin{thebibliography}{10}

\bibitem{AdFo}
Robert~A. Adams and John J.~F. Fournier.
\newblock {\em Sobolev spaces}, volume 140 of {\em Pure and Applied Mathematics
  (Amsterdam)}.
\newblock Elsevier/Academic Press, Amsterdam, second edition, 2003.

\bibitem{DiGaMaZu}
Karoline Disser, Giovanni~P. Galdi, Giusy Mazzone, and Paolo Zunino.
\newblock Inertial motions of a rigid body with a cavity filled with a viscous
  liquid.
\newblock {\em Arch. Ration. Mech. Anal.}, 221(1):487--526, 2016.

\bibitem{feireisl}
Eduard Feireisl.
\newblock {\em Dynamics of viscous compressible fluids}, volume~26 of {\em
  Oxford Lecture Series in Mathematics and its Applications}.
\newblock Oxford University Press, Oxford, 2004.

\bibitem{FeNoPe}
Eduard Feireisl, Anton\'{\i}n Novotn\'{y}, and Hana Petzeltov\'{a}.
\newblock On the existence of globally defined weak solutions to the
  {N}avier-{S}tokes equations.
\newblock {\em J. Math. Fluid Mech.}, 3(4):358--392, 2001.

\bibitem{GaMa}
Giovanni~P. Galdi and Giusy Mazzone.
\newblock On the motion of a pendulum with a cavity entirely filled with a
  viscous liquid.
\newblock In {\em Recent progress in the theory of the {E}uler and
  {N}avier-{S}tokes equations}, volume 430 of {\em London Math. Soc. Lecture
  Note Ser.}, pages 37--56. Cambridge Univ. Press, Cambridge, 2016.

\bibitem{GaMaMo}
Giovanni~P. Galdi, Giusy Mazzone, and Mahdi Mohebbi.
\newblock On the motion of a liquid-filled heavy body around a fixed point.
\newblock {\em Quart. Appl. Math.}, 76(1):113--145, 2018.

\bibitem{GaMaNe}
Giovanni~Paolo Galdi, V\'{a}clav M\'{a}cha, and \v{S}\'{a}rka Ne\v{c}asov\'{a}.
\newblock On the motion of a body with a cavity filled with compressible fluid.
\newblock {\em Arch. Ration. Mech. Anal.}, 232(3):1649--1683, 2019.

\bibitem{GaMaNeSh}
Giovanni~Paolo Galdi, V\'{a}clav M\'{a}cha, \v{S}\'arka Ne\v{c}asov\'{a}, and
  Bangwei She.
\newblock Pendulum with a compressible fluid.
\newblock {\em Under construction}.

\bibitem{grafakos}
Loukas Grafakos.
\newblock {\em Classical {F}ourier analysis}, volume 249 of {\em Graduate Texts
  in Mathematics}.
\newblock Springer, New York, second edition, 2008.

\bibitem{NoSt}
A.~Novotn\'{y} and I.~Stra\v{s}kraba.
\newblock {\em Introduction to the mathematical theory of compressible flow},
  volume~27 of {\em Oxford Lecture Series in Mathematics and its Applications}.
\newblock Oxford University Press, Oxford, 2004.

\end{thebibliography}

\end{document}